\DeclareFontFamily{U}{shuffle}{}
\DeclareFontShape{U}{shuffle}{m}{n}{ <-8>shuffle7 <8->shuffle10}{}
\newcommand{\nc}{\newcommand}
\nc{\AMZV}{\mathsf {AMZV}}
\nc{\ud}{\mathrm{d}}
\nc{\ES}{\mathsf {ES}}
\nc{\MZV}{\mathsf {MZV}}
\nc{\MtV}{\mathsf {MtV}}
\nc{\MTV}{\mathsf {MTV}}
\nc{\MSV}{\mathsf {MSV}}
\nc{\MMV}{\mathsf {MMV}}
\nc{\MMVo}{\mathsf {MMVo}}
\nc{\MMVe}{\mathsf {MMVe}}
\nc{\AMMV}{\mathsf {AMMV}}
\nc{\AMTV}{\mathsf {AMTV}}
\nc{\AMtV}{\mathsf {AMtV}}
\nc{\AMSV}{\mathsf {AMSV}}
\nc{\CMZV}{\mathsf {CMZV}}
\nc{\sha}{\shuffle}
\nc{\cst}{\rotatebox[origin=c]{180}{$\sha$}}
\nc{\cstt}{\rotatebox[origin=c]{180}{$\scriptstyle \sha$}}
\nc{\de}{\delta}
\nc{\DD}{{\mathbb D}}
\nc{\anbb}[1]{\left\langle#1\right\rangle}
\nc{\bibb}[1]{\left\{#1\right\}}
\nc{\mibb}[1]{\left[#1\right]}
\nc{\smbb}[1]{\left(#1\right)}
\nc{\doubb}[1]{\llbracket#1\rrbracket}
\nc{\dm}[1]{\left|#1\right|}
\nc{\Gbinom}[2]{\genfrac{(}{)}{0mm}{0}{#1}{#2}}
\nc{\gbinom}[2]{\genfrac{(}{)}{0mm}{1}{#1}{#2}}
\nc{\Rbinom}[2]{\genfrac{\langle}{\rangle}{0mm}{0}{#1}{#2}}
\nc{\rbinom}[2]{\genfrac{\langle}{\rangle}{0mm}{1}{#1}{#2}}
\nc{\Qbinom}[2]{\genfrac{[}{]}{0mm}{0}{#1}{#2}_q}
\nc{\qbinom}[2]{\genfrac{[}{]}{0mm}{1}{#1}{#2}_q}
\nc{\binq}[2]{\genfrac{[}{]}{0mm}{0}{#1}{#2}}
\nc{\tbnq}[2]{\genfrac{[}{]}{0mm}{1}{#1}{#2}}
\nc{\cinq}[2]{\genfrac{\{}{\}}{0mm}{0}{#1}{#2}}
\nc{\tcnq}[2]{\genfrac{\{}{\}}{0mm}{1}{#1}{#2}}
\nc{\mfrac}[2]{\genfrac{}{}{0pt}{}{#1}{#2}}
\nc{\tf}{\tfrac}
\nc{\db}{{\mathbb D}}
\nc{\pari}{{\rm par}}
\nc{\dk}{{\mathbb K}}
\nc{\ola}{\overleftarrow}
\nc{\ora}{\overrightarrow}
\nc{\lra}{\longrightarrow}
\nc{\Lra}{\Longrightarrow}
\nc\Res{{\rm Res}}
\nc\setX{{\mathsf{X}}}
\nc\fA{{\mathfrak{A}}}
\nc\evaM{{\texttt{M}}}
\nc\evaML{{\text{\em{\texttt{M}}}}}
\nc\z{{\texttt{z}}}
\nc\emz{\emph{\texttt{z}}}
\nc\tx{{\texttt{x}}}
\nc\txp{{\tx_1}} 
\nc\txn{{\tx_{-1}}} 
\nc\neo{{1}}
\nc{\yi}{{1}}
\nc\one{{-1}}
\nc\gD{{\Delta}}
\nc\eps{{\varepsilon}}
\nc{\bfMB}{{\bf MB}}
\nc{\bftB}{{\bf tB}}
\nc{\bfTB}{{\bf TB}}
\nc{\bfSB}{{\bf SB}}
\nc{\bfB}{{\bf B}}
\nc{\bfp}{{\bf p}}
\nc{\bfq}{{\bf q}}
\nc{\bfr}{{\bf r}}
\nc{\bfu}{{\bf u}}
\nc{\bfv}{{\bf v}}
\nc{\bfw}{{\bf w}}
\nc{\bfy}{{\bf y}}
\nc{\T}{\ddot{t}}
\nc{\bfe}{{\boldsymbol{\sl{e}}}}
\nc{\bfi}{{\boldsymbol{\sl{i}}}}
\nc{\bfj}{{\boldsymbol{\sl{j}}}}
\nc{\bfk}{{\boldsymbol{\sl{k}}}}
\nc{\bfl}{{\boldsymbol{\sl{l}}}}
\nc{\bfm}{{\boldsymbol{\sl{m}}}}
\nc{\bfn}{{\boldsymbol{\sl{n}}}}
\nc{\bfs}{{\boldsymbol{\sl{s}}}}
\nc{\bft}{{\boldsymbol{\sl{t}}}}
\nc{\bfx}{{\boldsymbol{\sl{x}}}}
\nc{\bfz}{{\boldsymbol{\sl{z}}}}
\nc\bfgs{{\boldsymbol \gs}}
\nc\bfgl{{\boldsymbol \lambda}}
\nc\bfsi{{\boldsymbol \gs}}
\nc\bfet{{\boldsymbol \eta}}
\nc\bfeta{{\boldsymbol \eta}}
\nc\bfeps{{\boldsymbol \eps}}
\nc\mmu{{\boldsymbol \mu}}
\nc\bfone{{\bf 1}}
\nc{\myone}{{1}}
 \nc{\calA}{{\mathcal A}}
 \nc{\calB}{{\mathcal B}}
 \nc{\calC}{{\mathcal C}}
 \nc{\calD}{{\mathcal D}}
 \nc{\calE}{{\mathcal E}}
 \nc{\calF}{{\mathcal F}}
 \nc{\calG}{{\mathcal G}}
 \nc{\calH}{{\mathcal H}}
 \nc{\calI}{{\mathcal I}}
 \nc{\calJ}{{\mathcal J}}
 \nc{\calK}{{\mathcal K}}
 \nc{\calL}{{\mathcal L}}
 \nc{\calM}{{\mathcal M}}
 \nc{\calN}{{\mathcal N}}
 \nc{\calO}{{\mathcal O}}
 \nc{\calP}{{\mathcal P}}
 \nc{\calQ}{{\mathcal Q}}
 \nc{\calR}{{\mathcal R}}
 \nc{\calS}{{\mathcal S}}
 \nc{\calT}{{\mathcal T}}
 \nc{\calU}{{\mathcal U}}
 \nc{\calV}{{\mathcal V}}
 \nc{\calW}{{\mathcal W}}
 \nc{\calX}{{\mathcal X}}
 \nc{\calY}{{\mathcal Y}}
 \nc{\calZ}{{\mathcal Z}}
  \nc{\cala}{{\mathcal a}}
 \nc{\calb}{{\mathcal b}}
 \nc{\calc}{{\mathcal c}}
 \nc{\cald}{{\mathcal d}}
 \nc{\cale}{{\mathcal e}}
 \nc{\calf}{{\mathcal f}}
 \nc{\calg}{{\mathcal g}}
 \nc{\calh}{{\mathcal h}}
 \nc{\cali}{{\mathcal i}}
 \nc{\calj}{{\mathcal j}}
 \nc{\calk}{{\mathcal k}}
 \nc{\call}{{\mathcal l}}
 \nc{\calm}{{\mathcal m}}
 \nc{\caln}{{\mathcal n}}
 \nc{\calo}{{\mathcal o}}
 \nc{\calp}{{\mathsf p}}
 \nc{\calq}{{\mathcal q}}
 \nc{\calr}{{\mathcal r}}
 \nc{\cals}{{\mathcal s}}
 \nc{\calt}{{\mathcal t}}
 \nc{\calu}{{\mathcal u}}
 \nc{\calv}{{\mathcal v}}
 \nc{\calw}{{\mathcal w}}
 \nc{\calx}{{\mathcal x}}
 \nc{\caly}{{\mathcal y}}
 \nc{\calz}{{\mathcal z}}
 \nc{\ot}{{\otimes}}
\def\int{\displaystyle\!int}
\def\lim{\displaystyle\!lim}
\def\sum{\displaystyle\!sum}
\def\sup{\displaystyle\!sup}
\def\inf{\displaystyle\!inf}
\def\cap{\displaystyle\!cap}
\def\max{\displaystyle\!max}
\def\min{\displaystyle\!min}
\def\frac{\displaystyle\!frac}
\nc{\gam}{{\gamma}}
\nc{\gG}{{\Gamma}}
\nc{\om}{{\omega}}
\nc{\vep}{{\varepsilon}}
\nc{\ga}{{\alpha}}
\nc{\gl}{{\lambda}}
\nc{\gb}{{\beta}}
\nc{\gd}{{\delta}}
\nc{\gf}{{\varphi}}
\nc{\gs}{{\sigma}}
\nc{\gk}{{\kappa}}
\nc{\gS}{\Sigma}
\let\oldsection\section
\renewcommand\section{\setcounter{equation}{0}\oldsection}
\DeclareMathOperator*{\dep}{dep}
\DeclareMathOperator{\Li}{Li}
\nc\UU{\mbox{\bfseries U}}
\nc\FF{\mbox{\bfseries \itshape F}}
\nc\h{\mbox{\bfseries \itshape h}}\nc\dd{\mbox{d}}
\nc\g{\mbox{\bfseries \itshape g}}
\nc\xx{\mbox{\bfseries \itshape x}}
\def\R{\mathbb{R}}
\def\N{\mathbb{N}}
\def\Z{\mathbb{Z}}
\def\Q{\mathbb{Q}}
\def\xx{\left(\frac{1-x}{1+x} \right)}
\nc\divg{{\text{div}}}
\theoremstyle{plain}
\newtheorem{thm}{Theorem}[section]
\newtheorem{cor}[thm]{Corollary}
\newtheorem{con}[thm]{Conjecture}
\newtheorem{pro}[thm]{Proposition}
\theoremstyle{definition}
\nc{\cicc}[1]{{}_{{}^{ \bigcirc\hskip-1.2ex{#1}\hskip.3ex{}}}}
\nc{\cic}[1]{{}^{\bigcirc\hskip-1.15ex{\raisebox{-0.015cm}{\text{$\scriptscriptstyle #1$}}}\hskip.25ex{}}}
\nc{\ccic}[1]{{}^{\bigcirc\hskip-1.5ex{\raisebox{-0.015cm}{\text{$\scriptscriptstyle #1$}}}\hskip.25ex{}}}
\nc{\ncic}[1]{ {\bigcirc\hskip-1.6ex{\raisebox{-0.0cm}{\text{$\scriptstyle #1$}}}\hskip.25ex{}}}
\nc{\nncic}[1]{ {\bigcirc\hskip-2ex{\raisebox{-0.0cm}{\text{$\scriptstyle #1$}}}\hskip.25ex{}}}
\nc{\cci}[1]{{}_{{}^{ {\textstyle \bigcirc}\hskip-2.05ex{#1}\hskip-.35ex{}}}}
\nc{\ccicc}[1]{{}_{{}^{ {\textstyle \bigcirc}\hskip-1.55ex{#1}\hskip-0.1ex{}}}}
\nc{\x}{\rm{x}}
\nc{\tworow}[2]{\left(#1 \atop #2\right)}
\nc{\fl}{{\mathfrak l}}
\nc{\fm}{{\mathfrak m}}
\begin{document}
\title{\bf On the Proof of the Gen\v{c}ev-Rucki Conjecture for Multiple Ap\'ery-Like Series}
\author{
{Ce Xu\thanks{Email: cexu2020@ahnu.edu.cn}}\\[1mm]
\small School of Mathematics and Statistics, Anhui Normal University,\\ \small Wuhu 241002, P.R. China
}
\date{}
\maketitle

\noindent{\bf Abstract.}
In this paper, we employ the theories and techniques of hypergeometric functions to provide two distinct proofs of the conjectured identities involving multiple Ap\'ery-like series with central binomial coefficients and multiple harmonic star sums, as recently proposed by Gen\v{c}ev and Rucki. Furthermore, we establish several more general identities for multiple Ap\'ery-like series. Furthermore, by utilizing the method of iterated integrals, a class of multiple mixed values can be expressed as combinations of the multiple Ap\'ery-like series identities conjectured by Gen\v{c}ev and Rucki and $\zeta(2,\ldots,2)$, thus allowing explicit formulas for these multiple mixed values to be derived in terms of Riemann zeta values.

\medskip

\noindent{\bf Keywords}: Multiple Ap\'ery-like series; Multiple harmonic (star) sums; Central binomial coefficients; Hypergeometric function; Iterated integral.
\medskip

\noindent{\bf AMS Subject Classifications (2020):} 11M32, 11M99.

\section{Introduction}
For integers $k_1,\ldots,k_r\geq 1$, a finite sequence $\bfk:=(k_1,\ldots, k_r)\in\N^r$ is called a \emph{composition}. For a composition $\bfk=(k_1,\ldots,k_r)$, we put
\begin{equation*}
 |\bfk|:=k_1+\cdots+k_r,\quad \dep(\bfk):=r,
\end{equation*}
and call them the weight and the depth of $\bfk$, respectively. If $k_1>1$, $\bfk$ is called \emph{admissible}.
As a convention, we denote by $\{m\}_r$ the sequence of $m$'s with $r$ repetitions.

For a composition $\bfk=(k_1,\ldots,k_r)$ and positive integer $n$, the \emph{multiple harmonic sums} and \emph{multiple harmonic star sums} are defined by
\begin{align}
\zeta_n(\bfk):=\sum\limits_{n\geq n_1>\cdots>n_r>0 } \frac{1}{n_1^{k_1}\cdots n_r^{k_r}}\in \Q\quad
\text{and}\quad
\zeta^\star_n(\bfk):=\sum\limits_{n\geq n_1\geq\cdots\geq n_r>0} \frac{1}{n_1^{k_1}\cdots n_r^{k_r}}\in \Q\label{MHSs+MHSSs},
\end{align}
respectively. If $n<r$ then ${\zeta_n}(\bfk):=0$ and ${\zeta _n}(\emptyset )={\zeta^\star _n}(\emptyset ):=1$. When $\bfk$ is admissible, by taking the limit $n\rightarrow \infty$ in \eqref{MHSs+MHSSs} we get the \emph{multiple zeta values} (MZVs) and the \emph{multiple zeta star values}, respectively
\begin{align*}
{\zeta}( \bfk):=\lim_{n\rightarrow \infty}{\zeta _n}(\bfk)\in \R \quad
\text{and}\quad
{\zeta^\star}( \bfk):=\lim_{n\rightarrow \infty}{\zeta^\star_n}( \bfk)\in \R.
\end{align*}
When the depth is 1, we recover the classical Riemann zeta values $\zeta(n)$. The concept of multiple zeta values was independently introduced in the early 1990s by Hoffman \cite{H1992} and Zagier \cite{DZ1994}. Owing to their profound connections with diverse mathematical and physical fields-including knot theory, algebraic geometry, and theoretical physics-the study of multiple zeta values has attracted sustained and widespread interest among researchers. After more than three decades of development, the subject has generated a substantial body of research. For an authoritative synthesis of results up to 2016, we refer the reader to Zhao's comprehensive monograph \cite{Z2016}.

The most fundamental problem concerning multiple zeta values is the study of their relations over the rational number field $\mathbb{Q}$. Euler's celebrated result $\zeta(2) = \pi^2/6$, together with the transcendence of $\pi$, implies the transcendence of $\zeta(2)$. His approach can be naturally extended to show that every Riemann zeta value at even positive integers $\zeta(2n)$ is a rational multiple of $\pi^{2n}$. In contrast, although it is widely conjectured that $\zeta(2n+1)$ is transcendental for all $n \geq 1$, not a single instance of this has been proven to date. A major breakthrough came in 1978, when Ap\'ery \cite{Apery1978} astonished the mathematical community by proving the irrationality of $\zeta(3)$. His proof relied on a clever and intricate analysis of the series
\begin{equation}\label{equ:AperySeries}
\zeta(3) = \frac{5}{2} \sum_{n =1}^\infty \frac{(-1)^{n-1}}{n^3 \binom{2n}{n}}.
\end{equation}
Due to Ap\'ery's contributions, series whose general term contains central binomial coefficients in either the numerator or denominator are generally referred to as \emph{Ap\'ery-like series}. Furthermore, if the general term of an Ap\'ery-like series contains multiple harmonic (star) sums in the numerator, we refer to it as a \emph{multiple Ap\'ery-like series}. The study of (multiple) Ap\'ery-like series has attracted considerable attention from numerous experts and scholars. For some recent work on the subject, we refer the reader to \cite{Au2024,CantariniD2019,Chen2016,ChenWangZhong2025,ChenWang2025,GR2025,LaiLuorr2022,LiYan2025,Lupu2022,XuZhao2021c} and the references therein. Recently, while studying explicit formulas for multiple Ap\'ery-like series of the form
\begin{align*}
\sum_{n_1\geq \cdots\geq n_r\geq 1} \frac{\binom{2n_1}{n_1}}{n_14^{n_1}} \frac{4^{n_r}}{\binom{2n_r}{n_r}}\prod\limits_{j=2}^r \frac{1}{n_j^2}
\end{align*}
where both the largest summation index $n_1$ and and the smallest index $n_r$ involve central binomial coefficients, Gen\v{c}ev and Rucki proposed the following conjecture:
\begin{con}\label{con-one} (Gen\v{c}ev and Rucki, Conjecture, \cite{GR2025})
For an arbitrary $r\in\N$ with $r>1$, we have
\begin{align}\label{equ-con-one}
\sum_{n=1}^\infty \frac{\binom{2n}{n}}{n4^n}\zeta_n^\star(\{2\}_r)=2(1-4^{-r})\zeta(2r+1).
\end{align}
\end{con}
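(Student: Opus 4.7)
The strategy is to package all of the identities \eqref{equ-con-one} into the single generating function
\begin{equation*}
F(x) := \sum_{r \geq 0} S_r\, x^{2r}, \qquad S_r := \sum_{n=1}^\infty \frac{\binom{2n}{n}}{n 4^n}\, \zeta_n^\star(\{2\}_r),
\end{equation*}
to obtain a closed form for $F(x)$, and then to read off $S_r$ from the $x^{2r}$-coefficient.

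The starting point is the classical identity $\sum_{r \geq 0}\zeta_n^\star(\{2\}_r)\, x^{2r} = \prod_{k=1}^n k^2/(k^2-x^2) = (n!)^2/[(1-x)_n(1+x)_n]$, valid for $|x|<1$. Combined with $\binom{2n}{n}/4^n = (1/2)_n/n!$, interchanging the $n$- and $r$-summations gives
\begin{equation*}
F(x) \;=\; \sum_{n=1}^\infty \frac{(1/2)_n\, n!}{n\,(1-x)_n(1+x)_n}.
\end{equation*}
I would then apply the partial fraction expansion
\begin{equation*}
\frac{1}{\prod_{k=1}^n(k^2-x^2)} \;=\; \sum_{k=1}^n \frac{2(-1)^{k-1}k^2}{(n-k)!\,(n+k)!\,(k^2-x^2)},
\end{equation*}
which is a one-line residue computation at the poles $x^2=k^2$, and swap the order of the $n$- and $k$-summations to obtain
\begin{equation*}
F(x) \;=\; 2\sum_{k=1}^\infty \frac{(-1)^{k-1}k^2}{k^2-x^2}\, I_k, \qquad I_k := \sum_{n=k}^\infty \frac{(1/2)_n\, n!}{n\,(n-k)!\,(n+k)!}.
\end{equation*}

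The crucial step is the evaluation of $I_k$. After substituting $n=k+m$, using $(1/2)_{k+m} = (1/2)_k\,(k+1/2)_m$, and applying the Beta-integral identity $1/(k+m)_{k+1} = (1/k!)\int_0^1 t^{k+m-1}(1-t)^k\,dt$, the $m$-sum reduces to the binomial series $\sum_{m \geq 0}(k+1/2)_m\, t^m/m! = (1-t)^{-k-1/2}$; the remaining Beta integral is
\begin{equation*}
I_k \;=\; \frac{(1/2)_k}{k!}\int_0^1 t^{k-1}(1-t)^{-1/2}\,dt \;=\; \frac{(1/2)_k}{k!}\, B(k,1/2) \;=\; \frac{1}{k}.
\end{equation*}
Substituting this collapse back gives $F(x) = 2\sum_{k \geq 1}(-1)^{k-1}k/(k^2-x^2)$.

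Finally, expanding $k/(k^2-x^2) = \sum_{r \geq 0} x^{2r}/k^{2r+1}$ and interchanging summations one last time yields $F(x) = 2\sum_{r \geq 0}\eta(2r+1)\,x^{2r}$, where $\eta$ is the Dirichlet eta function. Using $\eta(2r+1) = (1 - 2^{-2r})\zeta(2r+1) = (1-4^{-r})\zeta(2r+1)$ for $r \geq 1$ and reading off the coefficient of $x^{2r}$ proves the identity $S_r = 2(1-4^{-r})\zeta(2r+1)$. The main obstacle is checking the clean collapse $I_k = 1/k$; once this is in hand, everything else is bookkeeping, and the convergence of all rearrangements is easily justified for $|x|<1$ by standard estimates on $(1 \pm x)_n$.
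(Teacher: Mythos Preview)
Your argument is correct. It differs from the paper's primary proof (Theorem~\ref{thm-one-hydigama}/\ref{thm-Apery-zeta}), which recognises $F(x)=\frac{1}{2(1-x^2)}\,{}_3F_2\bigl(\begin{smallmatrix}1,1,3/2\\2-x,2+x\end{smallmatrix};1\bigr)$ and then applies the cubic transformation \eqref{ref-cite-hygem-funct-1} to pass to a ${}_3F_2$ at $z=-1$; that new series collapses termwise to $\sum_k(-1)^k\bigl(\frac{1}{x+k}-\frac{1}{x-k}\bigr)$ and is then identified with digamma values via the duplication formula. Your route---partial fractions in $x^2$, swap of $n$ and $k$, and the Beta-integral collapse $I_k=1/k$---is instead very close in spirit to the paper's \emph{second} proof (Proposition in Section~4 and \eqref{Hypergenfunvtion0-main-resut3}): there too one partial-fractions the $x$-dependence, interchanges sums, and evaluates the inner $n$-sum, the only difference being that the paper quotes Gauss's ${}_2F_1$ summation while your Beta-integral manipulation is precisely Euler's integral proof of that special case. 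Both arguments land on the same closed form $F(x)=2\sum_{k\ge1}(-1)^{k-1}k/(k^2-x^2)$, and the paper's Section~4 version is stated for general parameters $(a,b)$, whereas yours is tailored to $(a,b)=(1,3/2)$ and so avoids any external citation.

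One small caveat: the sentence ``convergence of all rearrangements is easily justified for $|x|<1$'' is a little optimistic. After the swap the $k$-sum has general term $\sim(-1)^{k-1}/k$, so the double sum over $(n,k)$ is \emph{not} absolutely convergent and Fubini does not apply as stated. The clean fix is to do exactly what you do at the end anyway: extract the coefficient of $x^{2r}$ first. For each fixed $r\ge1$ the resulting double sum has terms of size $O(n^{-3/2}k^{-2r-1})$ and is absolutely convergent, so the interchange is honest there, which is all the conjecture requires.
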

In this paper, we employ two distinct hypergeometric function methods to prove this conjecture and further establish explicit evaluations for several other related series.

\section{Proof of Conjecture \ref{con-one} via Hypergeometric Series}

In this section, we employ transformation formulas for hypergeometric series to prove Conjecture \ref{con-one}, and further establish generalizations of the identity in Conjecture 1.1 to its functional form. We first need to express the series on the left-hand side of \eqref{equ-con-one} in the form of a hypergeometric series.

We note that
\begin{align}\label{equ-b-one}
\sum_{r=0}^\infty \zeta_n^\star(\{2\}_r)x^{2r} =\prod_{j=1}^n \left(1+\frac{x^2}{j^2}+\frac{x^4}{j^4}+\cdots\right)=\prod_{j=1}^n \left(1-\frac{x^2}{j^2}\right)^{-1}=\frac{(1)_n^2}{(1-x)_n(1+x)_n},
\end{align}
where $(x)_n:=x(x+1)\cdots(x+n-1)$ and $(x)_0:=1$. Multiplying \eqref{equ-b-one} by $\frac{\binom{2n}{n}}{n 4^n}$ and summing $n$ from $1$ to $\infty$ yields
\begin{align}\label{equ-b-two}
\sum_{r=0}^\infty \left\{\sum_{n=1}^\infty \frac{\binom{2n}{n}}{n 4^n} \zeta_n^\star(\{2\}_r)\right\}x^{2r} =\sum_{n=1}^\infty \frac{(1/2)_n (1)_n}{(1-x)_n(1+x)_n} \frac{1}{n}=\frac{1}{2(1-x^2)}{}_3F_2\left({1,1,3/2\atop 2-x,2+x};1\right),
\end{align}
where the \emph{general hypergeometric series} ${}_pF_q$ is defined as
\begin{align*}
{}_pF_q\left({a_1,a_2,\ldots,a_p\atop b_1,b_2,\ldots,b_q};z\right):=\sum_{n=0}^\infty \frac{(a_1)_n(a_2)_n\cdots (a_p)_n}{(b_1)_n(b_2)_n\cdots(b_q)_n}\frac{z^n}{n!}.
\end{align*}

By applying transformation formulas for hypergeometric series, we can derive a closed-form evaluation of the hypergeometric series on the right-hand side of \eqref{equ-b-two}.
\begin{thm}\label{thm-one-hydigama} For $x\in \mathbb{C}\setminus \Z_0\ (\Z_0:=\Z\setminus \{0\})$, we have
\begin{align}\label{thm-one-equ-hydigama}
{}_3F_2\left({1,1,3/2\atop 2-x,2+x};1\right)&=4\log(2)(1-x^2)+2(1-x^2)(\psi(1-x/2)+\psi(1+x/2))\nonumber\\&\quad-2(1-x^2)(\psi(1-x)+\psi(1+x)),
\end{align}
where $\psi(s)$ denotes the digamma function defined by
\begin{align}\label{defn-classical-psi-funtion}
\psi(s)=-\gamma-\frac1{s}+\sum_{k=1}^\infty \left(\frac1{k}-\frac{1}{s+k}\right),
\end{align}
where $s\in\mathbb{C}\setminus \N_0^-$ and $\N_0^-:=\N^-\cup\{0\}=\{0,-1,-2,-3,\ldots\}$. Here $\gamma$ denotes the \emph{Euler-Mascheroni constant}.
\end{thm}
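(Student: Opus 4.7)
The strategy is to reduce the stated ${}_3F_2$ at unit argument via Thomae's transformation to a more symmetric ${}_3F_2$ that admits a natural integral representation, evaluate that integral in closed form, and then rewrite the answer using the reflection and duplication formulas for $\Gamma$ and $\psi$. Concretely, applying Thomae's transformation
\[
{}_3F_2\!\left({a,b,c\atop d,e};1\right)=\frac{\Gamma(d)\Gamma(e)\Gamma(\sigma)}{\Gamma(a)\Gamma(b+\sigma)\Gamma(c+\sigma)}\,{}_3F_2\!\left({d-a,\,e-a,\,\sigma\atop b+\sigma,\,c+\sigma};1\right),\qquad \sigma:=d+e-a-b-c,
\]
with $(a,b,c,d,e)=(3/2,1,1,2-x,2+x)$ (so $\sigma=1/2$) and using $\Gamma(1/2)=\sqrt\pi$, $\Gamma(3/2)=\sqrt\pi/2$, produces
\[
{}_3F_2\!\left({1,1,3/2\atop 2-x,2+x};1\right)=\frac{8\,\Gamma(2-x)\Gamma(2+x)}{\pi}\,{}_3F_2\!\left({1/2,\,1/2-x,\,1/2+x\atop 3/2,\,3/2};1\right).
\]

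Next, I would evaluate the inner series. Unfolding its general term with $(1/2)_n/(3/2)_n=1/(2n+1)$ and $(3/2)_n\,n!=(2n+1)!/4^n$ shows that the $n$-th coefficient equals $\prod_{k=0}^{n-1}[(2k+1)^2-(2x)^2]/[(2n+1)(2n+1)!]$. Comparing with the Chebyshev-type Maclaurin expansion
\[
\sin(\beta\arcsin t)=\sum_{n=0}^\infty\frac{\beta\prod_{k=0}^{n-1}\bigl((2k+1)^2-\beta^2\bigr)}{(2n+1)!}\,t^{2n+1}
\]
(immediate from the ODE $(1-t^2)y''-ty'+\beta^2y=0$ with $y(0)=0,\,y'(0)=\beta$), setting $\beta=2x$ and integrating once produces
\[
{}_3F_2\!\left({1/2,\,1/2-x,\,1/2+x\atop 3/2,\,3/2};1\right)=\int_0^1\frac{\sin(2x\arcsin t)}{2x\,t}\,dt.
\]

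I would then compute this integral by substituting $t=\sin\theta$, integrating by parts with $dv=\cot\theta\,d\theta$ (the boundary terms vanishing), and expanding the resulting $-\int_0^{\pi/2}\cos(2x\theta)\log\sin\theta\,d\theta$ via the Fourier series $\log\sin\theta=-\log 2-\sum_{k\ge1}k^{-1}\cos(2k\theta)$. The elementary evaluation $\int_0^{\pi/2}\cos(2x\theta)\cos(2k\theta)\,d\theta=(-1)^kx\sin(\pi x)/[2(x^2-k^2)]$ reduces the remaining work to the series $\sum_{k\ge1}(-1)^k/[k(x^2-k^2)]$, which after the partial-fraction decomposition $\frac{1}{k(x^2-k^2)}=\frac{1}{2x^2}\bigl(\frac{2}{k}-\frac{1}{k-x}-\frac{1}{k+x}\bigr)$ and the identity $\sum_{k\ge0}(-1)^k/(k+a)=\tfrac12[\psi((a+1)/2)-\psi(a/2)]$ collapses to digamma values. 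The two $\log 2$ contributions cancel exactly, leaving
\[
\int_0^1\frac{\sin(2x\arcsin t)}{2x\,t}\,dt=\frac{\sin(\pi x)}{8x}\bigl[\psi(1-\tfrac{x}{2})+\psi(1+\tfrac{x}{2})-\psi(\tfrac12-\tfrac{x}{2})-\psi(\tfrac12+\tfrac{x}{2})\bigr].
\]

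Finally, combining with the Thomae step and applying the reflection formula $\Gamma(2-x)\Gamma(2+x)=(1-x^2)\pi x/\sin(\pi x)$ cancels the factors of $\sin(\pi x)/x$, yielding ${}_3F_2(1,1,3/2;2-x,2+x;1)=(1-x^2)D$ where $D$ denotes the bracketed digamma expression above. The Legendre duplication formula $\psi(2z)=\tfrac12[\psi(z)+\psi(z+1/2)]+\log 2$ at $z=(1\pm x)/2$ gives $\psi(\tfrac12\pm\tfrac{x}{2})+\psi(1\pm\tfrac{x}{2})=2\psi(1\pm x)-2\log 2$, which rewrites $(1-x^2)D$ as exactly $4\log 2(1-x^2)+2(1-x^2)[\psi(1-\tfrac{x}{2})+\psi(1+\tfrac{x}{2})-\psi(1-x)-\psi(1+x)]$, the form stated in the theorem; meromorphic continuation to $x\in\mathbb{C}\setminus\Z_0$ then follows from the analyticity of both sides away from the displayed poles. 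The main obstacle is the bookkeeping in the third step: one must track both the constant $-\log 2$ summand and each harmonic Fourier coefficient carefully so that the $\log 2$ contributions cancel precisely, leaving only the correct four-digamma combination in $D$.
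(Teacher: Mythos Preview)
Your argument is correct: Thomae's transformation with $(a,b,c,d,e)=(3/2,1,1,2-x,2+x)$ indeed yields the symmetric ${}_3F_2\bigl({1/2,\,1/2-x,\,1/2+x\atop 3/2,\,3/2};1\bigr)$, your identification of this series with $\int_0^1 \sin(2x\arcsin t)/(2xt)\,dt$ via the Chebyshev expansion checks out term by term, and the Fourier evaluation of $-\int_0^{\pi/2}\cos(2x\theta)\log\sin\theta\,d\theta$ together with the reflection and duplication identities produces exactly the stated closed form, with the $\log 2$ terms cancelling as you say.

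That said, the paper takes a considerably shorter route. Instead of Thomae, it applies the quadratic transformation
\[
{}_3F_2\!\left({a,b,c\atop a-b+1,a-c+1};z\right)=(1-z)^{-a}\,{}_3F_2\!\left({a-b-c+1,\,a/2,\,(a+1)/2\atop a-b+1,\,a-c+1};\frac{-4z}{(1-z)^2}\right)
\]
at $a=2$, $b=1+x$, $c=1-x$, $z=-1$, which converts the original ${}_3F_2$ at argument $1$ directly into $4\,{}_3F_2\bigl({2,1+x,1-x\atop 2-x,2+x};-1\bigr)$. The key advantage is that this new series telescopes immediately: since $(2)_n/n!=n+1$ and $(1\pm x)_n/(2\pm x)_n=(1\pm x)/(n+1\pm x)$, the summand is just the rational function $4(1-x^2)(-1)^n(n+1)/[(n+1)^2-x^2]$, and a single partial-fraction split gives the digamma combination without any integral, Fourier expansion, or delicate $\log 2$ bookkeeping. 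Your approach, by contrast, goes through a genuine analytic detour (the $\arcsin$ integral and the Fourier series of $\log\sin\theta$); it works and has the virtue of using only the linear Thomae relation plus classical integral calculus, but it is longer and the cancellations are less transparent. The final duplication-formula step is the same in both proofs.
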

\begin{proof}
From \cite[Eq. (3.1.15)]{A2000}, we have
\begin{align}\label{ref-cite-hygem-funct-1}
{}_3F_2\left({a,b,c\atop a-b+1,a-c+1};z \right)=(1-z)^{-a}{}_3F_2\left({a-b-c+1,\frac{a}{2},\frac{a+1}{2}\atop a-b+1,a-c+1};-\frac{4z}{(1-z)^2}\right).
\end{align}
Setting $a=2,b=1+x,c=1-x$ and $z=-1$ gives
\begin{align}\label{equ-hyperfunct-digamam}
{}_3F_2\left({1,1,3/2\atop 2-x,2+x};1\right)&=4\ {}_3F_2\left({2,1+x,1-x\atop 2-x,2+x};-1\right)\nonumber\\
&=4\sum_{n=0}^\infty \frac{(2)_n(1+x)_n(1-x)_n}{(2-x)_n(2+x)_n} \frac{(-1)^n}{n!}\nonumber\\
&=4(1-x^2)\sum_{n=0}^\infty \frac{(n+1)(-1)^n}{(n+1-x)(n+1+x)}\nonumber\\
&=2(1-x^2)\sum_{n=0}^\infty \left(\frac{(-1)^n}{n+1-x}+\frac{(-1)^n}{n+1+x} \right)\nonumber\\
&=(1-x^2)\left(\psi\Big(1-\frac{x}{2}\Big)-\psi\Big(\frac{1-x}{2}\Big)+\psi\Big(1+\frac{x}{2}\Big) -\psi\Big(\frac{1+x}{2}\Big)\right).
\end{align}
By \emph{multiplication theorem} (see \cite[Page 913, 8.365]{GR})
\begin{align}
\psi(nz)=\frac1{n}\sum_{k=0}^{n-1}\psi\Big(z+\frac{k}{n}\Big)+\log 2,
\end{align}
letting $n=2$ yields
\begin{align}
\psi(z)=2\psi(2z)-\psi\Big(z+\frac1{2}\Big)-2\log2.
\end{align}
Hence, letting $z=1\pm x$ and add them together, then
\begin{align}
\psi\Big(\frac{1+x}{2}\Big)+\psi\Big(\frac{1-x}{2}\Big)=2\Big(\psi(1+x)+\psi(1-x)\Big)-\left( \psi\Big(1+\frac{x}{2}\Big)+\psi\Big(1-\frac{x}{2}\Big)\right)-4\log2.
\end{align}
Substituting it into \eqref{equ-hyperfunct-digamam} yields the desired evaluation.
\end{proof}

We now proceed to prove Conjecture \ref{con-one}.
\begin{thm}\label{thm-Apery-zeta}For $r\in \N$, we have
\begin{align} \label{thm-equ-Apery-zeta}
\sum_{n=1}^\infty \frac{\binom{2n}{n}}{n4^n}\zeta_n^\star(\{2\}_r)=2(1-4^{-r})\zeta(2r+1).
\end{align}
\end{thm}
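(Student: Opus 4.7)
The plan is to combine the generating-function identity established just before Theorem~\ref{thm-one-hydigama} with the closed-form evaluation of Theorem~\ref{thm-one-hydigama} itself, and then extract coefficients of $x^{2r}$ using the Taylor expansion of the digamma function around $1$. Substituting \eqref{thm-one-equ-hydigama} into \eqref{equ-b-two} makes the factor $1-x^2$ cancel against the three copies of $1-x^2$ appearing in Theorem~\ref{thm-one-hydigama}, so I expect to obtain the simplified generating identity
\begin{align*}
\sum_{r=0}^\infty \left\{\sum_{n=1}^\infty \frac{\binom{2n}{n}}{n 4^n}\zeta_n^\star(\{2\}_r)\right\}x^{2r}
= 2\log 2 + \bigl(\psi(1+x/2)+\psi(1-x/2)\bigr) - \bigl(\psi(1+x)+\psi(1-x)\bigr),
\end{align*}
valid for $x$ in a sufficiently small punctured neighborhood of $0$.

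The second step is to expand the digamma combinations in powers of $x$. Starting from \eqref{defn-classical-psi-funtion} one derives the standard identities $\psi(1+z)=-\gamma+\sum_{m\geq 1}(-1)^{m-1}\zeta(m+1)z^m$ and $\psi(1-z)=-\gamma-\sum_{m\geq 1}\zeta(m+1)z^m$, so that odd powers cancel and one is left with the even power series
\begin{align*}
\psi(1+z)+\psi(1-z) = -2\gamma - 2\sum_{j=1}^\infty \zeta(2j+1)z^{2j}.
\end{align*}
Applying this with $z=x/2$ and with $z=x$ and subtracting, the $-2\gamma$ terms cancel and the difference becomes $-2\sum_{j\geq 1}\zeta(2j+1)(4^{-j}-1)x^{2j} = 2\sum_{j\geq 1}(1-4^{-j})\zeta(2j+1)x^{2j}$.

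Comparing the coefficient of $x^{2r}$ on both sides of the generating identity for $r\geq 1$ immediately yields \eqref{thm-equ-Apery-zeta}. As a sanity check, the constant term is $2\log 2$, which matches the classical evaluation $\sum_{n\geq 1}\binom{2n}{n}/(n\,4^n)=2\log 2$ (this corresponds to the $r=0$ case, where $\zeta_n^\star(\emptyset)=1$).

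There is no real obstacle beyond bookkeeping: the two genuine inputs are Theorem~\ref{thm-one-hydigama} and the Taylor expansion of $\psi(1\pm z)$, and the remainder is algebraic simplification and comparison of coefficients. The only points requiring a modicum of care are (i) justifying the interchange of the double sum in \eqref{equ-b-two} so that the generating function identity is valid in a neighborhood of $x=0$, which follows from absolute convergence of $\sum_n \binom{2n}{n}/(n 4^n)\cdot(1)_n^2/[(1-x)_n(1+x)_n]$ for small real $x$, and (ii) recording that both sides are even functions of $x$, so only the even-indexed coefficients contribute.
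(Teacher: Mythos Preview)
Your proposal is correct and follows essentially the same route as the paper: substitute the closed form from Theorem~\ref{thm-one-hydigama} into the generating-function identity \eqref{equ-b-two}, expand $\psi(1\pm z)$ as a Taylor series about $z=0$, and compare coefficients of $x^{2r}$. Your write-up is slightly more explicit about the cancellation of odd powers and the constant term, but the argument is the same as the paper's proof.
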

\begin{proof}
Applying \eqref{thm-one-equ-hydigama} and by utilizing the power series expansion of the digamma function at zero
\[\psi(1+x)+\gamma=\sum_{n=1}^\infty (-1)^{n-1}\zeta(n+1)x^n,\]
we obtain
\begin{align}\label{proof-thm-two-proecess-two}
\frac{1}{2(1-x^2)}{}_3F_2\left({1,1,3/2\atop 2-x,2+x};1\right)=2\log2+2\sum_{n=1}^\infty (1-4^{-n})\zeta(2n+1)x^{2n}.
\end{align}
Finally, by comparing the coefficients of $x^{2r}$ on both sides of \eqref{equ-b-one} and \eqref{proof-thm-two-proecess-two} yields the desired result.
\end{proof}

Indeed, by employing similar methods, we can establish the functional forms of equations \eqref{thm-one-equ-hydigama} and \eqref{thm-equ-Apery-zeta}.
\begin{thm}\label{thm-one-hyparadigama} For $x\in \mathbb{C}\setminus \Z_0$ and $t\in [0,1]$, we have
\begin{align}\label{thm-one-equ-hyparadigama}
{}_3F_2\left({1,1,3/2\atop 2-x,2+x};1-t\right)=\frac{2(1-x^2)}{1-t} \sum_{n=1}^\infty \left(\frac{\sqrt{t}-1}{\sqrt{t}+1}\right)^n \left(\frac{1}{x-n}-\frac1{x+n}\right).
\end{align}
\end{thm}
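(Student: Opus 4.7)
The plan is to mirror the proof of Theorem \ref{thm-one-hydigama}, but this time to keep the argument of the hypergeometric series as a free parameter rather than specializing it to $-1$. Applying the quadratic transformation \eqref{ref-cite-hygem-funct-1} with $a=2$, $b=1+x$, $c=1-x$ gives
\begin{equation*}
{}_3F_2\left({2,1+x,1-x\atop 2-x,2+x};z\right)=(1-z)^{-2}\,{}_3F_2\left({1,1,3/2\atop 2-x,2+x};-\frac{4z}{(1-z)^2}\right),
\end{equation*}
so the first step is to choose $z$ so that $-4z/(1-z)^2=1-t$. Solving this quadratic and selecting the root lying in $[-1,0]$ for $t\in[0,1]$ yields $z=(\sqrt t-1)/(\sqrt t+1)$, with $1-z=2/(\sqrt t+1)$.

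Next I would evaluate the left-hand ${}_3F_2$ in closed form. Using $(2)_n/n!=n+1$ together with the telescoping ratios $(1+x)_n/(2+x)_n=(1+x)/(n+1+x)$ and $(1-x)_n/(2-x)_n=(1-x)/(n+1-x)$, the series collapses to $(1-x^2)\sum_{n\ge 0}(n+1)z^n/[(n+1-x)(n+1+x)]$. Applying the partial-fraction decomposition
\begin{equation*}
\frac{n+1}{(n+1-x)(n+1+x)}=\frac12\left(\frac{1}{n+1-x}+\frac{1}{n+1+x}\right),
\end{equation*}
shifting the summation index $n\mapsto n-1$, and using the sign-flip $1/(n-x)+1/(n+x)=-[1/(x-n)-1/(x+n)]$ rewrites the series as $-\frac{1-x^2}{2z}\sum_{n\ge 1}[1/(x-n)-1/(x+n)]z^n$. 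Multiplying by $(1-z)^2$ and simplifying
\begin{equation*}
-\frac{(1-z)^2}{2z}=-\frac{4/(\sqrt t+1)^2}{2(\sqrt t-1)/(\sqrt t+1)}=\frac{2}{1-t}
\end{equation*}
produces exactly the right-hand side of \eqref{thm-one-equ-hyparadigama}.

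The hard part is purely algebraic bookkeeping: selecting the convergent branch $z(t)=(\sqrt t-1)/(\sqrt t+1)$ of the quadratic and then tracking the minus signs that appear when one passes between $1/(n\pm x)$ and $1/(x\mp n)$ and when one inverts $(1-z)^2/(2z)$, so that the net constant emerges as $+2(1-x^2)/(1-t)$ rather than its negative. Convergence on $t\in(0,1]$ is immediate since $|z|<1$, and the boundary case $t=0$ (where $z=-1$) reduces the identity to an alternating series that can be matched against the digamma expression of Theorem \ref{thm-one-hydigama} as a useful consistency check.
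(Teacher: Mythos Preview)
Your proposal is correct and follows exactly the same route as the paper: set $a=2$, $b=1+x$, $c=1-x$ and $z=(\sqrt t-1)/(\sqrt t+1)$ in the quadratic transformation \eqref{ref-cite-hygem-funct-1}, then simplify the resulting ${}_3F_2$ with the same telescoping ratios and partial fractions used in the proof of Theorem~\ref{thm-one-hydigama}. The paper's proof is literally a one-line reference to this substitution, so your write-up is just a more explicit version of the same argument.
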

\begin{proof}
The theorem follows immediately from \eqref{ref-cite-hygem-funct-1} if we set $a=2,b=1+x,c=1-x$ and $z=\frac{\sqrt{t}-1}{\sqrt{t}+1}$.
\end{proof}

\begin{thm}\label{thm-Apery-zeta-para-MPL}For $r\in \N$ and $z\in [0,1]$, we have
\begin{align} \label{thm-equ-Apery-zeta-para-MPL}
\sum_{n=1}^\infty \frac{\binom{2n}{n}}{n4^n}\zeta_n^\star(\{2\}_r)(1-z)^n=-2\Li_{2r+1}\left(\frac{\sqrt{z}-1}{\sqrt{z}+1}\right),
\end{align}
where the \emph{polylogarithm function} $\Li_p(x)$ is defined by
\begin{align}
\Li_{p}(x):=\sum_{n=1}^\infty \frac{x^n}{n^p}\quad (x\in[-1,1],\ (p,x)\neq (1,1),\ p\in \N).
\end{align}
\end{thm}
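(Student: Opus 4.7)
The plan is to parameterize the proof of Theorem \ref{thm-Apery-zeta} by using Theorem \ref{thm-one-hyparadigama} in place of Theorem \ref{thm-one-hydigama}. The key observation is that inserting a factor $(1-z)^n$ into the generating-function argument of \eqref{equ-b-two} produces a two-variable identity in $x$ and $z$, from which the desired formula follows by extracting coefficients of $x^{2r}$.

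First I will multiply the star-generating identity \eqref{equ-b-one} by $\binom{2n}{n}(1-z)^n/(n 4^n)$ and sum over $n\geq 1$. Using $\binom{2n}{n}/4^n=(1/2)_n/n!$ and the index shift $n\mapsto n+1$ (so that $(1/2)_{n+1}=(1/2)(3/2)_n$ and $(1\pm x)_{n+1}=(1\pm x)(2\pm x)_n$), the resulting double sum rearranges into
\begin{align*}
\sum_{r=0}^\infty\left\{\sum_{n=1}^\infty \frac{\binom{2n}{n}}{n4^n}\zeta_n^\star(\{2\}_r)(1-z)^n\right\}x^{2r}
=\frac{1-z}{2(1-x^2)}\,{}_3F_2\!\left({1,1,3/2\atop 2-x,2+x};1-z\right),
\end{align*}
which is the direct $(1-z)^n$-weighted analogue of \eqref{equ-b-two}.

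Next I will invoke Theorem \ref{thm-one-hyparadigama} with $t=z$: its right-hand side carries the factor $2(1-x^2)/(1-z)$, which cancels exactly the prefactor $(1-z)/[2(1-x^2)]$ above. Writing $w:=(\sqrt{z}-1)/(\sqrt{z}+1)$, this leaves
\begin{align*}
\sum_{r=0}^\infty\left\{\sum_{n=1}^\infty \frac{\binom{2n}{n}}{n4^n}\zeta_n^\star(\{2\}_r)(1-z)^n\right\}x^{2r}
=\sum_{n=1}^\infty w^n\left(\frac{1}{x-n}-\frac{1}{x+n}\right).
\end{align*}
Expanding $\frac{1}{x-n}-\frac{1}{x+n}=-\frac{2}{n}\cdot\frac{1}{1-x^2/n^2}=-2\sum_{k\geq 0}x^{2k}/n^{2k+1}$ for $|x|<1$ and interchanging the sums, the right-hand side becomes $-2\sum_{k\geq 0}x^{2k}\,\Li_{2k+1}(w)$. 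Comparing coefficients of $x^{2r}$ then gives \eqref{thm-equ-Apery-zeta-para-MPL}.

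The only nontrivial point is the justification of the summation interchanges, which reduces to absolute convergence for $|x|$ small and $z\in[0,1]$; since then $w\in[-1,0]$, each $\Li_{2k+1}(w)$ is finite (and convergent even at the endpoint $w=-1$ for $2k+1\geq 1$). The boundary cases supply natural sanity checks: $z=1$ makes $w=0$ and both sides vanish, while $z=0$ gives $w=-1$ and recovers Theorem \ref{thm-Apery-zeta} via $-2\Li_{2r+1}(-1)=2(1-4^{-r})\zeta(2r+1)$. Beyond the bookkeeping in the index shift $n\mapsto n+1$, I anticipate no serious obstacle.
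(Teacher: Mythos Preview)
Your proof is correct and follows essentially the same approach as the paper: both arguments weight \eqref{equ-b-one} by $\binom{2n}{n}(1-z)^n/(n4^n)$ to obtain the $(1-z)$-analogue of \eqref{equ-b-two}, apply Theorem~\ref{thm-one-hyparadigama} with $t=z$, expand the resulting partial fractions as a power series in $x$, and extract the coefficient of $x^{2r}$. Your write-up is in fact slightly more explicit than the paper's about the index shift $n\mapsto n+1$, the expansion $\tfrac{1}{x-n}-\tfrac{1}{x+n}=-2\sum_{k\ge 0}x^{2k}/n^{2k+1}$, and the convergence/boundary checks, but the argument is the same.
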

\begin{proof}
By an elementary calculation, we find that
\begin{align}\label{equ-b-two-para}
\sum_{r=0}^\infty \left\{\sum_{n=1}^\infty \frac{\binom{2n}{n}}{n 4^n} \zeta_n^\star(\{2\}_r)(1-z)^n\right\}x^{2r}& =\sum_{n=1}^\infty \frac{(1/2)_n (1)_n}{(1-x)_n(1+x)_n} \frac{(1-z)^n}{n}\nonumber\\&=\frac{1-z}{2(1-x^2)}{}_3F_2\left({1,1,3/2\atop 2-x,2+x};1-z\right).
\end{align}
Noting that from \eqref{thm-one-equ-hyparadigama} we obtain
\begin{align}\label{equ-b-two-para-2}
\frac{1-z}{2(1-x^2)}{}_3F_2\left({1,1,3/2\atop 2-x,2+x};1-z\right)=-2\sum_{r=0}^\infty \Li_{2r+1} \left(\frac{\sqrt{z}-1}{\sqrt{z}+1}\right)x^{2r}.
\end{align}
Hence, comparing the the coefficients of $x^{2r}$ on both sides of above yields the desired evaluation.
\end{proof}
Clearly, Conjecture \ref{con-one} is obtained by setting $z=0$ in Theorem \ref{thm-Apery-zeta-para-MPL}.

\section{Closed Forms of A Family of Multiple Mixed Values}
In this section, we employ the method of iterated integrals, combined with the results from the previous section, to derive closed-form formulas expressing a class of multiple mixed values involving alternating harmonic numbers and multiple harmonic sums in terms of Riemann zeta values.
\begin{thm}\label{thm-MZVs-zeta} For $r\in \N$,
\begin{align}\label{thm-equ-MZVs-zeta}
\sum_{n=1}^\infty \frac{{\bar H}_{2n}\zeta_{n-1}(\{2\}_{r-1})}{n^2}=\sum_{j=1}^{r}(-1)^{j}(1-4^{-j})\zeta(\{2\}_{r-j})\zeta(2j+1),
\end{align}
where the alternating harmonic number ${\bar H}_{2n}$ is defined by ${\bar H}_{2n}:=\sum_{j=1}^{2n} \frac{(-1)^j}{j}$. From \cite[Cor. 5.6.4]{Z2016}, we have
\begin{align*}
\zeta(\{2\}_r)=\frac{\pi^{2r}}{(2r+1)!}\quad (r\in\N).
\end{align*}
\end{thm}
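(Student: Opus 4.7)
The plan is to reduce the identity, via iterated integrals, to the hypergeometric machinery of Section 2. Using $\bar H_{2n}=-\log 2+\int_0^1 t^{2n}/(1+t)\,dt$ together with $\Li_{\{2\}_r}(z)=\sum_{n\geq 1}z^n\zeta_{n-1}(\{2\}_{r-1})/n^2$, exchanging summation and integration yields
\[\sum_{n\geq 1}\frac{\bar H_{2n}\zeta_{n-1}(\{2\}_{r-1})}{n^2}=-\log 2\cdot\zeta(\{2\}_r)+I_r,\qquad I_r:=\int_0^1\frac{\Li_{\{2\}_r}(t^2)}{1+t}\,dt,\]
so the theorem is equivalent to the identity
\[I_r=\log 2\cdot\zeta(\{2\}_r)+\sum_{j=1}^r(-1)^j(1-4^{-j})\zeta(\{2\}_{r-j})\zeta(2j+1).\]

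Next, I would represent $\Li_{\{2\}_r}(t^2)=\int_0^{t^2}(\omega_0\omega_1)^r$ with $\omega_0=du/u$ and $\omega_1=du/(1-u)$, and apply the substitution $u=v^2$. The forms transform as $\omega_0(u)\mapsto 2\omega_0(v)$ and $\omega_1(u)\mapsto \omega_1(v)-\omega_{-1}(v)$ with $\omega_{-1}:=dv/(1+v)$, via the partial fraction $2v/(1-v^2)=1/(1-v)-1/(1+v)$. Expanding the non-commutative product gives
\[\Li_{\{2\}_r}(t^2)=2^r\sum_{(\eps_1,\ldots,\eps_r)\in\{\pm 1\}^r}\eps_1\cdots\eps_r\int_0^t\omega_0\omega_{\eps_1}\omega_0\omega_{\eps_2}\cdots\omega_0\omega_{\eps_r}.\]
Prepending the outer form $dt/(1+t)=\omega_{-1}(t)\,dt$ and integrating over $[0,1]$ exhibits $I_r$ as a signed combination of length-$(2r+1)$ multiple mixed values whose leading letter is $\omega_{-1}$.

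To avoid evaluating each mixed value individually, I pass to the generating function $\Phi(x):=\sum_{r\geq 0}I_r x^{2r}$. On one hand, reorganizing the series by its maximal index gives the closed form
\[\sum_{r\geq 0}\Li_{\{2\}_r}(z)x^{2r}=1+x^2 z\cdot{}_5F_4\!\left({1,1,1,1,1\atop 2,2,1-x,1+x};z\right),\]
so $\Phi(x)$ is an integral of this ${}_5F_4$ against $dt/(1+t)$ with $z=t^2$, which a Thomae/Kummer-type transformation should collapse to $\frac{\sinh(\pi x)}{4\pi x(1+x^2)}\,{}_3F_2(1,1,3/2;2-ix,2+ix;1)$. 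On the other hand, using $\sum_{s\geq 0}\zeta(\{2\}_s)x^{2s}=\sinh(\pi x)/(\pi x)$ and \eqref{proof-thm-two-proecess-two} under $x\mapsto ix$, the generating function of $\log 2\cdot\zeta(\{2\}_r)+\sum_{j=1}^r(-1)^j(1-4^{-j})\zeta(\{2\}_{r-j})\zeta(2j+1)$ equals the same expression, and matching the two factorizations completes the argument.

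The main obstacle is precisely that matching step: showing the ${}_5F_4$-integral for $\Phi(x)$ simplifies to the stated $\sinh(\pi x)\cdot{}_3F_2$ product. The algebraic engine producing the factors $1-4^{-j}$ is expected to be the digamma duplication formula $\psi(2z)=\tfrac12\psi(z)+\tfrac12\psi(z+\tfrac12)+\log 2$ that already drove \eqref{thm-one-equ-hydigama}; its interaction with the substitution $x\mapsto ix$ yields the coefficient structure that distinguishes the right-hand side from the naive convolution of $\zeta(\{2\}_{r-j})$ with $\zeta(2j+1)$.
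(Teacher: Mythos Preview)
Your opening reduction is correct and is exactly what the paper does in reverse: both of you arrive at
\[
\sum_{n\geq 1}\frac{\bar H_{2n}\zeta_{n-1}(\{2\}_{r-1})}{n^2}=I_r-\log 2\cdot\zeta(\{2\}_r),\qquad I_r=\int_0^1\frac{\Li_{\{2\}_r}(t^2)}{1+t}\,dt,
\]
so the whole problem is the evaluation of $I_r$.

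Here your argument has a genuine gap. You propose to package $\Phi(x)=\sum_{r\ge0}I_rx^{2r}$ as a ${}_5F_4$-integral and then assert that ``a Thomae/Kummer-type transformation should collapse'' it to $\dfrac{\sinh(\pi x)}{\pi x}\cdot\dfrac{1}{4(1+x^2)}\,{}_3F_2\bigl(1,1,3/2;2-ix,2+ix;1\bigr)$. That factorization is precisely the content of the theorem, and you do not supply the transformation; you yourself flag this as ``the main obstacle''. There is also a slip in your ${}_5F_4$: since $\sum_{r\ge0}\zeta_{n-1}(\{2\}_r)x^{2r}=\prod_{j=1}^{n-1}\bigl(1+x^2/j^2\bigr)$ is a \emph{polynomial} in $x^2$, the Pochhammer factors $(1\pm ix)_{n-1}$ must sit in the \emph{numerator}, whereas your parameters $2,2,1-x,1+x$ put them in the denominator (and with the wrong sign of $x^2$). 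No standard ${}_5F_4$ transformation produces that $\sinh(\pi x)\times{}_3F_2$ splitting out of thin air.

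The paper's missing idea is short and avoids all of this. Undo your substitution: write $I_r$ as an iterated integral on $[0,1]$, change $t\mapsto t^2$ back and then $t\mapsto 1-t$, so that the outer one-form becomes $\bigl(\tfrac{1}{\sqrt{1-t}}-1\bigr)\tfrac{dt}{t}$. Now use $\tfrac{1}{\sqrt{1-t}}=\sum_{n\ge0}\binom{2n}{n}t^n/4^n$ together with the elementary iterated-integral identity
\[
n\int_0^1 t^{n-1}\,dt\Bigl\{\tfrac{dt}{1-t}\tfrac{dt}{t}\Bigr\}^r=\sum_{j=0}^{r}(-1)^{j}\zeta^\star_n(\{2\}_{j})\,\zeta(\{2\}_{r-j})
\]
(the paper cites this from \cite{XuZhao2020d}). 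Summing against $\binom{2n}{n}/(n4^n)$ gives directly
\[
2I_r=\sum_{j=0}^{r}(-1)^{j}\zeta(\{2\}_{r-j})\sum_{n\ge1}\frac{\binom{2n}{n}}{n4^n}\zeta^\star_n(\{2\}_{j}),
\]
which is already the desired convolution: the $j=0$ term is $2\log 2\cdot\zeta(\{2\}_r)$ by \eqref{equ-integral-log}, and each $j\ge1$ term is $2(1-4^{-j})\zeta(2j+1)$ by Theorem~\ref{thm-Apery-zeta}. In other words, the route back to the ${}_3F_2$ of Section~2 is through the central binomial generating function $1/\sqrt{1-t}$, not through a ${}_5F_4$ transformation.
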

\begin{proof}
From \cite[Theorem 2.1]{XuZhao2020d}, replacing $p$ by $r$ and setting $x=1$ and all $m_j=2$, we have
\begin{align}\label{equ-one}
n\int_0^1 t^{n-1}dt\left\{\frac{dt}{1-t}\frac{dt}{t}\right\}^r=\sum_{j=1}^{r+1}(-1)^{j+1}\zeta^\star_n(\{2\}_{j-1})\zeta(\{2\}_{r-j+1})
\end{align}
where the iterated integral is defined in the following form
$$
\int_{a}^b f_1(t)dtf_2(t)dt\cdots f_p(t)dt:=\int\limits_{a<t_1<\cdots<t_p<b} f_p(t_p)f_{p-1}(t_{p-1})\cdots f_1(t_1)dt_1dt_2\cdots dt_p.
$$
Multiplying \eqref{equ-one} by $\frac{\binom{2n}{n}}{n 4^n}$ and summing $n$ from $1$ to $\infty$ and noting the fact that
\[\sum_{n=0}^\infty \frac{\binom{2n}{n}}{4^n}t^n=\frac1{\sqrt{1-t}}\quad (t\in [-1,1)),\]
we obtain
\begin{align}\label{equ-two}
\sum_{j=1}^{r+1}(-1)^{j+1}\zeta(\{2\}_{r-j+1})\sum_{n=1}^\infty \frac{\binom{2n}{n}}{n 4^n}\zeta^\star_n(\{2\}_{j-1})=\int_0^1 \left\{\frac1{t}\Big(\frac1{\sqrt{1-t}}-1\Big)dt\right\}\left\{\frac{dt}{1-t}\frac{dt}{t}\right\}^r.
\end{align}
Applying $t\mapsto 1-t$ yields
\begin{align}\label{equ-three}
\sum_{j=1}^{r+1}(-1)^{j+1}\zeta(\{2\}_{r-j+1})\sum_{n=1}^\infty \frac{\binom{2n}{n}}{n 4^n}\zeta^\star_n(\{2\}_{j-1})=\int_0^1 \left\{\frac{dt}{1-t}\frac{dt}{t}\right\}^r\left\{\frac1{1-t}\Big(\frac1{\sqrt{t}}-1\Big)dt\right\}.
\end{align}
Then letting $t\mapsto t^2$ gives
\begin{align}\label{equ-four}
\sum_{j=1}^{r+1}(-1)^{j+1}\zeta(\{2\}_{r-j+1})\sum_{n=1}^\infty \frac{\binom{2n}{n}}{n 4^n}\zeta^\star_n(\{2\}_{j-1})&=\int_0^1 \left\{\frac{2t dt}{1-t^2}\frac{2tdt}{t^2}\right\}^r\left\{\frac1{1-t^2}\Big(\frac1{t}-1\Big)2tdt\right\}\nonumber\\
&=\int_0^1 \left\{\frac{2t dt}{1-t^2}\frac{2dt}{t}\right\}^r\frac{2dt}{1+t}\nonumber\\
&=2\int_0^1 \frac{\Li_{\{2\}_r}(t^2)}{1+t}dt,
\end{align}
where the classical \emph{multiple polylogarithm} (MPL) is defined by
\begin{align}
\Li_{k_1,\ldots,k_r}(x)&:=\sum_{n_1>n_2>\cdots>n_r>0} \frac{x_1^{n_1}}{n_1^{k_1}\dotsm n_r^{k_r}}\quad (\forall k_j\in\N,\ (k_1,x)\neq (1,1))\\
&=\int_0^x \left(\frac{dt}{1-t}\right)\left(\frac{dt}{t}\right)^{k_r-1}\cdots
\left(\frac{dt}{1-t}\right)\left(\frac{dt}{t}\right)^{k_1-1}\nonumber.
\end{align}

Noting the fact that (see \cite{Chen2016})
\begin{align}
\sum_{n=1}^\infty \frac{\binom{2n}{n}}{n4^n}t^n=2\log\Big(\frac2{1+\sqrt{1-t}}\Big)\quad (t\in [-1,1)),
\end{align}
and letting $t=1$ yields
\begin{align}\label{equ-integral-log}
\sum_{n=1}^\infty \frac{\binom{2n}{n}}{n4^n}=2\log(2).
\end{align}
Substituting it into \eqref{equ-four} gives
\begin{align}\label{equ-five}
&\sum_{j=1}^{r}(-1)^{j}\zeta(\{2\}_{r-j})\sum_{n=1}^\infty \frac{\binom{2n}{n}}{n 4^n}\zeta^\star_n(\{2\}_{j})=2\int_0^1 \frac{\Li_{\{2\}_r}(t^2)}{1+t}dt-2\zeta(\{2\}_r)\log 2\nonumber\\
&=2\int_0^1 \frac{\Li_{\{2\}_r}(t^2)-\zeta(\{2\}_r)}{1+t}dt=2\sum_{n_1>n_2>\cdots>n_r>0}\frac{1}{n_1^2\cdots n_r^2} \int_0^1 \frac{t^{2n_1}-1}{1+t}dt\nonumber\\
&=2\sum_{n_1>n_2>\cdots>n_r>0}\frac{1}{n_1^2\cdots n_r^2} \sum_{j=1}^{n_1}\int_0^1 t^{2j-2}(t-1)dt\nonumber\\
&=2\sum_{n_1>n_2>\cdots>n_r>0}\frac{1}{n_1^2\cdots n_r^2} \sum_{j=1}^{n_1}\Big(\frac{1}{2j}-\frac1{2j-1}\Big)\nonumber\\
&=2\sum_{n_1>n_2>\cdots>n_r>0}\frac{1}{n_1^2\cdots n_r^2} \sum_{j=1}^{2n_1}\frac{(-1)^j}{j}.
\end{align}
Combining \eqref{equ-five} with \eqref{thm-equ-Apery-zeta} yields \eqref{thm-equ-MZVs-zeta}.
\end{proof}

Note that ${\bar H}_{2n}=\frac{H_n-t_n}{2}$, where $H_n$ and $t_n$ stand for the \emph{classical harmonic number} and \emph{classical odd harmonic number}, respectively, which are defined by
\begin{align*}
H_n:=\sum_{k=1}^n \frac1{k}\quad\text{and}\quad t_n:=\sum_{k=1}^n \frac1{k-1/2}\quad (n\in\N).
\end{align*}
Hence, we obtain
\begin{align*}
\sum_{n=1}^\infty \frac{t_n \zeta_{n-1}(\{2\}_{r-1})}{n^2}=\sum_{n=1}^\infty \frac{H_n \zeta_{n-1}(\{2\}_{r-1})}{n^2}-2\sum_{j=1}^{r}(-1)^{j}(1-4^{-j})\zeta(\{2\}_{r-j})\zeta(2j+1).
\end{align*}
From \cite{Ku2010}, we have
\begin{align}\label{Kuba-AKZV}
\xi(2;\{2\}_r)=\sum\limits_{n=1}^\infty \frac{\zeta_{n-1}(k_2,\ldots,k_r)H_n}{n^{k_1+1}},
\end{align}
where $\xi(2;\{2\}_r)$ is actually a special case of the Arakawa-Kaneko zeta values $\xi(p;k_1,k_2\ldots,k_r)$. The \emph{Arakawa-Kaneko zeta function} was introduced by Arakawa and Kaneko in \cite{AM1999}, defined as
\begin{align}\label{defn-Arakawa-Kaneko-zeta-F}
\xi(s;k_1,k_2\ldots,k_r):=\frac{1}{\Gamma(s)} \int\limits_{0}^\infty \frac{t^{s-1}}{e^t-1}{\rm Li}_{k_1,k_2,\ldots,k_r}(1-e^{-t})dt\quad (\Re(s)>0).
\end{align}
For recent developments on this topic, readers may refer to \cite{LuoSi2023,ZY2023} and the references therein.

\section{Another Proof of Hypergeometric Series ${}_3F_2\left({1,1,3/2\atop 2-x,2+x};1\right)$}
In this section, we present an alternative proof of equation \eqref{thm-one-equ-hydigama} using the method of partial fraction decomposition, and furthermore establish several formulas for parametric multiple Ap\'ery-like series.
\begin{pro} Let $\bar{\Z}:=\Z\backslash \{0,1\}$. For $x\in \mathbb{C}\setminus \Z_0$ and $a,b\in \R\backslash \bar{\Z}$ with $a+b\notin\{3,4,5,\ldots\}$,
\begin{align}\label{Hypergenfunvtion0-main-resutl}
{}_3F_2\left({1,a,b\atop 2-x,2+x};1\right)=(x^2-1)\sum_{k=-\infty,\atop k\neq 0}^\infty \frac{(-1)^k}{x+k} \frac{(a)_{|k|-1}(b)_{|k|-1} {\rm sign}(k)|k| \Gamma(3-a-b)}{\Gamma(2+|k|-a)\Gamma(2+|k|-b)}.
\end{align}
\end{pro}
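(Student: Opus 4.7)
The strategy is to decompose $1/[(2-x)_n(2+x)_n]$ by partial fractions in $x$ and then apply Gauss's summation theorem to the inner sum in $n$ that appears after the interchange. The starting observation is
\[
(2-x)_n(2+x)_n=\prod_{j=2}^{n+1}(j-x)(j+x)=\frac{1}{1-x^2}\prod_{j=1}^{n+1}(j-x)(j+x),
\]
which already accounts for the factor $1-x^2$ on the right-hand side of \eqref{Hypergenfunvtion0-main-resutl}.

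Computing residues of $1/\prod_{j=1}^{n+1}(j-x)(j+x)$ at the simple poles $x=\pm k$, $1\le k\le n+1$, and exploiting the evenness in $x$ gives
\[
\frac{1}{(2-x)_n(2+x)_n}=(1-x^2)\sum_{k=1}^{n+1}\frac{(-1)^{k+1}k}{(n+1-k)!(n+k+1)!}\left[\frac{1}{k-x}+\frac{1}{k+x}\right].
\]
Plugging this into $\sum_{n\ge 0}(a)_n(b)_n/[(2-x)_n(2+x)_n]$ and swapping the $n$- and $k$-summations expresses the left-hand side as
\[
(1-x^2)\sum_{k=1}^\infty(-1)^{k+1}k\left[\frac{1}{k-x}+\frac{1}{k+x}\right]S_k,\qquad S_k:=\sum_{n=k-1}^\infty\frac{(a)_n(b)_n}{(n+1-k)!(n+k+1)!}.
\]
The index shift $m=n-k+1$ together with the Pochhammer splittings $(a)_{m+k-1}=(a)_{k-1}(a+k-1)_m$, $(b)_{m+k-1}=(b)_{k-1}(b+k-1)_m$ and $(m+2k)!=(2k)!(2k+1)_m$ identifies $S_k$ with $\frac{(a)_{k-1}(b)_{k-1}}{(2k)!}{}_2F_1\left({a+k-1,b+k-1\atop 2k+1};1\right)$, which Gauss's theorem evaluates in closed form to $(a)_{k-1}(b)_{k-1}\Gamma(3-a-b)/[\Gamma(k+2-a)\Gamma(k+2-b)]$. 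A routine reindexing---writing $\frac{1}{k+x}=\frac{1}{x+\kappa}$ for $\kappa=k>0$ and $\frac{1}{k-x}=-\frac{1}{x+\kappa}$ for $\kappa=-k<0$---recasts the outer sum as a bilateral one over $\kappa\in\Z\setminus\{0\}$ and absorbs the signs to produce exactly the expression in \eqref{Hypergenfunvtion0-main-resutl}.

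\textbf{Main difficulty.} The most delicate step is justifying the interchange of summation, since absolute convergence of the rearranged double series is not obvious near the boundary $\Re(a+b)=3$. Stirling estimates give $(a)_n(b)_n/[(n+1-k)!(n+k+1)!]=O(n^{a+b-3})$ and $S_k=O(k^{2(a+b)-6})$, so the interchange is unconditional when $\Re(a+b)<5/2$; in the remaining strip I would exploit the alternating sign $(-1)^{k+1}$ via a Dirichlet-type argument on the partial $k$-sums and extend to the full parameter range by analytic continuation in $a$, $b$, and $x$. The hypotheses $a,b\in\R\setminus\bar{\Z}$, $a+b\notin\{3,4,\dots\}$ and $x\in\CC\setminus\Z_0$ are precisely those which keep every Gamma-factor on either side finite and avoid the individual poles $x=\pm k$, $|k|\ge 2$ of the summands.
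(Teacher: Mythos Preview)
Your proposal is correct and follows essentially the same route as the paper: partial-fraction decomposition of $1/[(2-x)_n(2+x)_n]$ in $x$, interchange of the $n$- and $k$-sums, the shift $m=n-k+1$, and Gauss's summation for the resulting ${}_2F_1$. The paper packages the partial fractions as $x/(x-1-n)_{2n+3}=\sum_{k=-n-1}^{n+1}(-1)^{n+k}\frac{k}{(n+k+1)!(n-k+1)!}\frac{1}{x+k}$ (i.e., it writes the bilateral form from the outset), whereas you work with $k\ge 1$ and the pair $1/(k-x)+1/(k+x)$ before reindexing at the end; these are algebraically identical. Your discussion of the interchange of summations and the analytic-continuation argument in the strip $5/2\le a+b<3$ is in fact more careful than the paper's proof, which performs the swap without comment.
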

\begin{proof} From the definition of the hypergeometric series ${}_3F_2$, it is straightforward to obtain
\begin{align*}
{}_3F_2\left({1,a,b\atop 2-x,2+x};1\right)&=\sum_{n=0}^\infty \frac{(1)_n(a)_n(b)_n}{(2+x)_n(2-x)_n}\frac1{n!}\nonumber\\
&=\sum_{n=0}^\infty \frac{(a)_n(b)_n}{(x-1-n)_{2n+3}}(-1)^n x(x^2-1).
\end{align*}
Using partial fraction expansion, there are
\begin{align*}
\frac{x}{(x-1-n)_{2n+3}}=\sum_{k=-1-n}^{n+1}(-1)^{n+k}\frac{k}{(n+k+1)!(n-k+1)!} \frac1{x+k}.
\end{align*}
Hence,
\begin{align*}
&{}_3F_2\left({1,a,b\atop 2-x,2+x};1\right)=(x^2-1)\sum_{n=0}^\infty (a)_n(b)_n\sum_{k=-1-n}^{n+1}(-1)^{k}\frac{k}{(n+k+1)!(n-k+1)!} \frac1{x+k}\\
&=(x^2-1)\sum_{k=-\infty}^\infty \frac{(-1)^k k}{x+k} \sum_{n\geq |k|-1} \frac{(a)_n(b)_n}{(n+k+1)!(n-k+1)!}\quad (n=m+|k|-1)\\
&=(x^2-1)\sum_{k=-\infty,\atop k\neq 0}^\infty \frac{(-1)^k |k| {\rm sign}(k)}{x+k} \sum_{m=0}^\infty \frac{(a)_{m+|k|-1}(b)_{m+|k|-1}}{(m+2|k|)!m!}\\
&=(x^2-1)\sum_{k=-\infty,\atop k\neq 0}^\infty \frac{(-1)^k |k| {\rm sign}(k)}{x+k} \sum_{m=0}^\infty \frac{(a)_{|k|-1}(a+|k|-1)_m(b)_{|k|-1}(b+|k|-1)_m}{(2|k|)!(1+|2k|)_mm!}\\
&=(x^2-1)\sum_{k=-\infty,\atop k\neq 0}^\infty \frac{(-1)^k |k| {\rm sign}(k) (a)_{|k|-1}(b)_{|k|-1} }{(x+k)(2|k|)!}{}_2F_1 \left({a+|k|-1,b+|k|-1\atop 1+2|k|};1\right)\\
&=(x^2-1)\sum_{k=-\infty,\atop k\neq 0}^\infty \frac{(-1)^k |k| {\rm sign}(k) (a)_{|k|-1}(b)_{|k|-1} }{(x+k)(2|k|)!} \frac{\Gamma(1+2|k|)\Gamma(3-a-b)}{\Gamma(2+|k|-a)\Gamma(2+|k|-b)}\\
&=(x^2-1)\sum_{k=-\infty,\atop k\neq 0}^\infty \frac{(-1)^k}{x+k} \frac{(a)_{|k|-1}(b)_{|k|-1} {\rm sign}(k)|k| \Gamma(3-a-b)}{\Gamma(2+|k|-a)\Gamma(2+|k|-b)},
\end{align*}
where we used the \emph{Gauss's summation theorem} (see \cite[Page 66, Thm. 2.2.2]{A2000})
\begin{align*}
{}_2F_1\left({a,b\atop c};1\right)=\frac{\Gamma(c)\Gamma(c-a-b)}{\Gamma(c-a)\Gamma(c-b)}.
\end{align*}
This completes the proof of the proposition.
\end{proof}

If letting $a=1$ in \eqref{Hypergenfunvtion0-main-resutl} then
\begin{align}\label{Hypergenfunvtion0-main-resut2}
{}_3F_2\left({1,1,b\atop 2-x,2+x};1\right)&=(x^2-1) \sum_{k=-\infty,\atop k\neq 0}^\infty \frac{(-1)^k}{x+k}\frac{(b)_{|k|-1}{\rm sign}(k)}{(2-b)_{|k|}}\nonumber\\
&=(x^2-1) \sum_{k=1}^\infty (-1)^k \frac{(b)_{k-1}}{(2-b)_{k}} \left(\frac1{x+k}-\frac1{x-k}\right).
\end{align}
Further, setting $b=\frac1{2},\ 1$ and $\frac3{2}$ in \eqref{Hypergenfunvtion0-main-resut2} give
\begin{align}
{}_3F_2\left({1,1,1/2\atop 2-x,2+x};1\right)&=(x^2-1)\sum_{k=1}^\infty \frac{(-1)^k k}{(k^2-1/4)(k^2-x^2)},\\
{}_3F_2\left({1,1,1\atop 2-x,2+x};1\right)&=(x^2-1)\sum_{k=1}^\infty (-1)^k \left\{\frac1{k(x+k)}-\frac{1}{k(x-k)}\right\}\nonumber\\
&=\frac{x^2-1}{x}\left(\frac1{x}-\frac{\pi}{\sin(\pi x)}\right),\\
{}_3F_2\left({1,1,3/2\atop 2-x,2+x};1\right)&=(x^2-1)2\sum_{k=1}^\infty (-1)^k \left(\frac{1}{x+k}-\frac1{x-k}\right)\nonumber\\
&=4\log(2)(1-x^2)+2(1-x^2)(\psi(1-x/2)+\psi(1+x/2))\nonumber\\&\quad-2(1-x^2)(\psi(1-x)+\psi(1+x)).\label{thm-one-equ-hydigama-prv-2}
\end{align}
The final equation \eqref{thm-one-equ-hydigama-prv-2} is consistent with the previous equation \eqref{thm-one-equ-hydigama}.

Finally, we present several results on parametric multiple Ap\'ery-like series. By a direct calculation, we obtain
\begin{align}
\sum_{r=0}^\infty \left\{\sum_{n=1}^\infty \frac{(a)_n}{n!n}\zeta_n^\star(\{2\}_r) \right\}x^{2r}=\frac{a}{1-x^2}{}_3F_2\left({1,1,1+a\atop 2-x,2+x};1\right).
\end{align}
By applying \eqref{Hypergenfunvtion0-main-resut2} with $b=a+1$, we expand the rational expression involving $x$ into a power series. Then, comparing the coefficients of $x^{2r}$ on both sides yields
\begin{align}\label{Hypergenfunvtion0-main-resut3}
\sum_{n=1}^\infty \frac{(a)_n}{n!n}\zeta_n^\star(\{2\}_r)&=2a\sum_{k=1}^\infty (-1)^{k-1} \frac{(1+a)_{k-1}}{(1-a)_{k}}\frac1{k^{2r+1}}\nonumber\\&=2\sum_{n=1}^\infty (-1)^{n-1} \frac{(a)_{n}}{(1-a)_{n}}\frac1{n^{2r+1}}.
\end{align}
Setting $a=1/2$ in \eqref{Hypergenfunvtion0-main-resut3} gives \eqref{thm-equ-Apery-zeta}. This provides an alternative proof of \eqref{thm-equ-Apery-zeta}.

\begin{thm} For integer $k,r\geq 0$ and $a\notin \Z$,
\begin{align}\label{Hypergenfunvtion0-main-resut-import}
\sum_{n=1}^\infty \frac{(a)_n}{n!n}\zeta_n^\star(\{2\}_r)\zeta_n(\{1\}_k;a)=2\sum_{i+j=k,\atop i,j\geq 0} \sum_{n=1}^\infty  \frac{(-1)^{n-1}}{n^{2r+1}} \frac{(a)_{n}}{(1-a)_{n}} \zeta_n(\{1\}_i;a)\zeta^\star_n(\{1\}_j;1-a),
\end{align}
where the \emph{Hurwitz-type multiple harmonic sums} and \emph{Hurwitz-type multiple harmonic star sums} are defined by
\begin{align*}
\zeta_n(\bfk;\alpha):=\sum\limits_{n\geq n_1>\cdots>n_r>0 } \frac{1}{(n_1+\alpha-1)^{k_1}\cdots (n_r+\alpha-1)^{k_r}}
\end{align*}
and
\begin{align*}
\zeta^\star_n(\bfk;\alpha):=\sum\limits_{n\geq n_1\geq\cdots\geq n_r>0} \frac{1}{(n_1+\alpha-1)^{k_1}\cdots (n_r+\alpha-1)^{k_r}},
\end{align*}
respectively. If $n<r$ then ${\zeta_n}(\bfk;\alpha):=0$ and ${\zeta _n}(\emptyset;\alpha)={\zeta^\star _n}(\emptyset;\alpha):=1$.
\end{thm}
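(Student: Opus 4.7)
The plan is to deduce \eqref{Hypergenfunvtion0-main-resut-import} from the already-established scalar identity \eqref{Hypergenfunvtion0-main-resut3} by treating the parameter $a$ as variable. Concretely, I would replace $a$ by $a+y$ in \eqref{Hypergenfunvtion0-main-resut3} (where $y$ is a new indeterminate), expand both sides as power series in $y$, and read off \eqref{Hypergenfunvtion0-main-resut-import} as the coefficient of $y^k$.

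The key input consists of two generating-function identities for Hurwitz-type multiple harmonic sums:
\begin{align*}
\frac{(a+y)_n}{(a)_n}=\prod_{j=1}^n\left(1+\frac{y}{j+a-1}\right)=\sum_{k=0}^\infty \zeta_n(\{1\}_k;a)\,y^k,
\end{align*}
\begin{align*}
\frac{(1-a)_n}{(1-a-y)_n}=\prod_{j=1}^n\frac{1}{1-y/(j-a)}=\sum_{j=0}^\infty \zeta^\star_n(\{1\}_j;1-a)\,y^j.
\end{align*}
The first follows by expanding the finite product: selecting the factor $y/(j+a-1)$ at positions $n\geq n_1>\cdots>n_k\geq 1$ (strict inequalities) and $1$ elsewhere gives precisely $\zeta_n(\{1\}_k;a)$. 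The second follows by expanding each factor as a geometric series in $y/(j-a)$, the resulting weak-inequality arrangement of indices producing $\zeta^\star_n(\{1\}_j;1-a)$.

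With these in hand, the proof proceeds by direct matching. Substituting $a\mapsto a+y$ in \eqref{Hypergenfunvtion0-main-resut3} and writing $(a+y)_n=(a)_n\cdot[(a+y)_n/(a)_n]$ on the left-hand side gives at once
\begin{align*}
\sum_{n=1}^\infty \frac{(a+y)_n}{n!\,n}\zeta_n^\star(\{2\}_r)=\sum_{k=0}^\infty y^k\sum_{n=1}^\infty \frac{(a)_n}{n!\,n}\zeta_n^\star(\{2\}_r)\zeta_n(\{1\}_k;a).
\end{align*}
On the right-hand side of \eqref{Hypergenfunvtion0-main-resut3}, factor
\begin{align*}
\frac{(a+y)_n}{(1-a-y)_n}=\frac{(a)_n}{(1-a)_n}\cdot\frac{(a+y)_n}{(a)_n}\cdot\frac{(1-a)_n}{(1-a-y)_n},
\end{align*}
substitute the two expansions above, and apply the Cauchy product to convert the product of $y$-series into a single series whose $y^k$-coefficient is exactly $\sum_{i+j=k}\zeta_n(\{1\}_i;a)\zeta^\star_n(\{1\}_j;1-a)$. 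Equating coefficients of $y^k$ on the two sides then yields \eqref{Hypergenfunvtion0-main-resut-import}.

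The main technical obstacle will be justifying the interchange of $y$-expansion with the outer $n$-summation. For $a\notin\Z$ and $|y|$ sufficiently small, the ratio $(a+y)_n/(1-a-y)_n$ differs from $(a)_n/(1-a)_n$ only by a gamma-ratio factor that grows at most polynomially in $n$, so the series $\sum_n n^{-2r-1}(-1)^{n-1}(a+y)_n/(1-a-y)_n$ converges absolutely and uniformly on a neighborhood of $y=0$ whenever $r\geq 0$. This permits coefficient extraction term by term, and the resulting identity extends in $a$ by analytic continuation to the full range $a\notin\Z$.
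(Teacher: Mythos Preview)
Your approach is correct and essentially the same as the paper's. The paper differentiates \eqref{Hypergenfunvtion0-main-resut3} $k$ times with respect to $a$, invoking the formulas $\frac{d^k}{da^k}(a)_n=k!(a)_n\,\zeta_n(\{1\}_k;a)$ and $\frac{d^k}{da^k}\frac{1}{(1-a)_n}=\frac{k!}{(1-a)_n}\,\zeta_n^\star(\{1\}_k;1-a)$ together with the Leibniz rule; your substitution $a\mapsto a+y$ with extraction of the $y^k$-coefficient is simply the Taylor-series reformulation of that same computation, and your two product expansions are exactly the generating-function versions of those derivative identities.
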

\begin{proof}
From \cite[Eqs. (2.29), (2.34)]{KanekoWangXuZhao2022}, we have
\begin{align*}
\frac{d^k}{da^k} (a)_n=k!(a)_n \zeta_n(\{1\}_k;a)\quad \text{and}\quad \frac{d^k}{da^k} \frac1{(1-a)_n}=\frac{k!}{(1-a)_n}\zeta^\star_n(\{1\}_k;1-a).
\end{align*}
Hence, Differentiating $a$ in \eqref{Hypergenfunvtion0-main-resut3} $k$ times and applying the above relation yields the desired result.
\end{proof}

In particular, setting $a=1/2$ in \eqref{Hypergenfunvtion0-main-resut-import} gives the following corollary.
\begin{cor} For integers $k,r\geq 0$,
\begin{align}\label{Hypergenfunvtion0-main-resut-import-cor}
\sum_{n=1}^\infty \frac{\binom{2n}{n}}{n 4^n}\zeta_n^\star(\{2\}_r)t_n(\{1\}_k)=2\sum_{i+j=k,\atop i,j\geq 0} \sum_{n=1}^\infty  \frac{(-1)^{n-1}}{n^{2r+1}} t_n(\{1\}_i)t_n^\star(\{1\}_j),
\end{align}
where the \emph{multiple $t$-harmonic sums} and \emph{multiple $t$-harmonic star sums} are defined by
\begin{align*}
t_n(\bfk):=\sum_{n\geq n_1>\cdots>n_r>0} \prod_{j=1}^r \frac{1}{(n_{j}-1/2)^{k_{j}}}\quad
\text{and}\quad
t^\star_n(\bfk):=\sum_{n\geq n_1\geq \cdots\geq n_r>0} \prod_{j=1}^r \frac{1}{(n_{j}-1/2)^{k_{j}}}.
\end{align*}
If $n<r$ then ${t_n}(\bfk):=0$ and let ${t_n}(\emptyset)={t^\star_n}(\emptyset ):=1$. When taking the limit $n\rightarrow \infty$ we get the so-called \emph{multiple $t$-values} and \emph{multiple $t$-star values}, respectively, see \cite{H2019}. In fact, Zhao \cite{Zhao2015} had begun studying some sum formulas for multiple $t$-values a few years prior to Hoffman's formal definition of multiple $t$-values.
\end{cor}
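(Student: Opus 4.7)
The plan is to obtain the corollary as a direct specialization of the preceding theorem at $a=1/2$, so essentially no new machinery is needed once we have set up the dictionary between Hurwitz-type harmonic sums at the parameter value $\alpha=1/2$ and the $t$-harmonic sums appearing in the statement.

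First, I would use the standard identity $(1/2)_n = (2n)!/(4^n\,n!)$ to rewrite the factor $(a)_n/n!$ in the theorem with $a=1/2$ as $\binom{2n}{n}/4^n$, which matches the coefficient appearing on the left-hand side of the corollary. Next, from the very definitions of $\zeta_n(\bfk;\alpha)$ and $\zeta_n^\star(\bfk;\alpha)$, specializing $\alpha=1/2$ gives $n_j+\alpha-1 = n_j-1/2$, so $\zeta_n(\bfk;1/2) = t_n(\bfk)$ and $\zeta_n^\star(\bfk;1/2) = t_n^\star(\bfk)$. In particular, the three Hurwitz-type factors appearing in the theorem with $\bfk=\{1\}_k$, $\{1\}_i$, and $\{1\}_j$ respectively reduce to $t_n(\{1\}_k)$, $t_n(\{1\}_i)$, and $t_n^\star(\{1\}_j)$; note that $1-a = 1/2$ as well, which is the reason the star factor on the right also becomes a $t$-star sum.

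Finally, I would observe that the ratio $(a)_n/(1-a)_n$ collapses to $1$ at $a=1/2$ because numerator and denominator coincide. Substituting all of these simplifications into the statement of the theorem produces precisely the identity claimed in the corollary. The only remark worth making concerns the absolute convergence of the series on the right, which follows from the factor $1/n^{2r+1}$ dominating the at-most-polylogarithmic growth of the products $t_n(\{1\}_i)\,t_n^\star(\{1\}_j)$, so the interchange of the finite sum over $i+j=k$ with the infinite sum over $n$ is legitimate. I expect no real obstacle here: the corollary is intended as a transparent specialization, and the entire argument amounts to recognizing the above three identifications and plugging them in.
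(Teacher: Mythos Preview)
Your proposal is correct and matches the paper's own approach exactly: the paper simply states that the corollary follows by setting $a=1/2$ in the preceding theorem, and your write-up fills in the routine identifications $(1/2)_n/n!=\binom{2n}{n}/4^n$, $\zeta_n(\bfk;1/2)=t_n(\bfk)$, $\zeta_n^\star(\bfk;1/2)=t_n^\star(\bfk)$, and $(1/2)_n/(1/2)_n=1$. The only minor quibble is that your convergence remark is both unnecessary (interchanging a \emph{finite} sum over $i+j=k$ with an infinite sum needs no justification) and slightly inaccurate for $r=0$, where the right-hand side is only conditionally convergent; but this does not affect the argument.
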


{\bf Declaration of competing interest.}
The author declares that he has no known competing financial interests or personal relationships that could have
appeared to influence the work reported in this paper.

{\bf Data availability.}
No data was used for the research described in the article.

{\bf Acknowledgments.}  The author is grateful to Dr. Steven Charlton and Kamcheong Au for their valuable insights on the evaluation of the identity in \eqref{thm-one-equ-hydigama}, and to Prof. Wenchang Chu for assessing \eqref{Hypergenfunvtion0-main-resut2}. The author is supported by the General Program of Natural Science Foundation of Anhui Province (Grant No. 2508085MA014).

\medskip

\end{document}